\DeclareMathOperator{\RE}{Re} 
 \def \D{\mathbb{D}}
 \def \C{\mathbb{C}}
 \def  \E{e^{i \theta}}
\numberwithin{equation}{section}
\newtheorem{theorem}{Theorem}[section]
\newtheorem{corollary}[theorem]{Corollary}
\theoremstyle{remark}
\newtheorem{remark}[theorem]{Remark}
\begin{document}

\title[Differential Subordination for Janowski  Functions]{Differential Subordination for   Janowski  Functions with Positive Real Part}

\author[S. Anand]{Swati Anand}
\address{Rajdhani College, University of Delhi, Delhi--110015, India}
\email{swati\_anand01@yahoo.com}

\author [S. Kumar]{Sushil Kumar}
\address {Bharati Vidyapeeth's college of Engineering, Delhi--110063, India}
\email{sushilkumar16n@gmail.com}

\author[V. Ravichandran]{V. Ravichandran}
\address{Department of Mathematics, National Institute of Technology,
Tiruchirappalli--620015, India}
\email{ravic@nitt.edu; vravi68@gmail.com}

\begin{abstract}Theory of differential subordination provides techniques to reduce differential subordination problems into verifying some simple algebraic condition called  admissibility condition. We exploit the first order  differential subordination theory to get several sufficient conditions for function satisfying several differential subordinations  to be a Janowski   function with positive real part.   As  applications, we obtain sufficient conditions for normalized analytic  functions to be Janowski starlike functions.
\end{abstract}

\keywords{Subordination; univalent functions; Carath\'eodory functions; starlike functions; Janowski function; admissible function.}

\maketitle

\section{Motivation }
The class of all analytic functions  defined on the unit disk $\mathbb{D} := \left\{z \in \mathbb{C} :|z| < 1\right\}$ that fixes the origin and has derivative 1 at the origin is denoted by $\mathcal{A}$. An analytic function $p$ is   \textit{subordinate} to the analytic function $q$, written $p \prec q$, if $p  = q\circ \omega$ for some analytic function $\omega:\mathbb{D}\to\mathbb{D}$ with $\omega(0) = 0$.  If the function $q$ is univalent in $\mathbb{D}$, then $p\prec q$ if and only if $p(0) = q(0)$ and $p(\mathbb{D}) \subseteq q(\mathbb{D})$.  The class $\mathcal{P}$ consists of Carath\'eodory functions $p:\mathbb{D} \to \mathbb{C}$ of the form $p(z)=1+c_1 z+c_2 z+\cdots$ that  maps the unit disk $\mathbb{D}$ into a region on the right half plane. For arbitrary fixed numbers $A$ and  $B$ satisfying $-1 \leq B < A \leq 1$, denote by  $\mathcal{P}[A,B]$ the class of analytic functions $p\in \mathcal{P}$ satisfy the subordination $p(z) \prec (1+Az)/(1+Bz)$. We call the functions in $\mathcal{P}[A,B]$ as \textit{Janowski   functions with positive real part}.  The class  $\mathcal{S}^*[A,\,B]$ consists  of functions $f \in \mathcal{A}$  such that
${z f'(z)}/{f(z)}\in \mathcal{P}[A,B]$ for $z \in \D.$
The functions in the class $\mathcal{S}^*[A,\,B]$ are called the  \textit{Janowski starlike functions}, introduced by Janowski \cite{Jan}. In particular,  $S^*[1-2 \alpha, -1] = \mathcal{S}^*(\alpha)$ is the class of starlike functions of order $\alpha$, see \cite{goodman 1,Robertson}.

Nunokawa \cite{Nunokawa} proved that if   $1+ zp'(z) \in \mathcal{P}[1,0]$, then  $p\in\mathcal{P}[1,0]$.   In 2007, Ali \emph{et al.} \cite{Ali} determined the conditions on $\beta$ and numbers $A,B,D,E \in [-1,1]$ so that $p\in \mathcal{P}[A,B]$ whenever $1+ \beta zp'(z)$ or $1+ \beta zp'(z)/p(z)$ or $1+ \beta zp'(z)/p^2(z)$ is  in the class $ \mathcal{P}[D,E]$.  In 2018,  authors  \cite{sushil} obtained the sharp lower bound  on $\beta$ so that the function $p(z)$ is subordinate to the functions $e^z$ and $(1+Az)/(1+Bz)$ whenever $1+\beta zp'(z)/p^j(z), \,\,(j = 0,1,2)$ is subordinate to the  functions with positive real part like  $\sqrt{1+z}$, $(1+Az)/(1+Bz)$.
Recently, Ahuja \emph{et al.} \cite{ahuja17} computed sharp estimates for $\beta$ so that a Carath\'{e}odory function is subordinate to a starlike function with positive real part whenever $1+\beta zp'(z)/p^j(z), \,\, (j =0,1,2)$ is subordinate to lemniscate starlike function. For more details, see \cite{bulc, Cho-Bell, Nunokawa,Kanika16BIMS, Tuneski}.
Motivated by work done  in \cite{ahuja17, Ali, Bohra18Hacet, Cho18Turk, Lecko18Rocky,gandhi,  sushil}, by using admissibility condition technique, a condition  on $\beta$ is established  so that $p\in \mathcal{P}[A,B]$ when  $1 + \beta p'(z)/p^k(z)$ with $k \in \mathbb{N} \cup \{0\}$, $ p(z)+ \beta zp'(z)/p^2(z)$, $1+\beta (zp'(z))^2/p^{k}(z)$  and $1/p(z) - \beta zp'(z)/p^k(z)$ for $k \in \mathbb{N} \cup \{0\}$ are in the class $\mathcal{P}[D,E]$. We compute a conditions on $\alpha$ and $\beta$ for $p\in \mathcal{P}[A,B]$ whenever $(1-\alpha)p(z) + \alpha p^2(z)+ \beta zp'(z)/p^k(z)\in \mathcal{P}[D,E]$ for $k = 0,1$ as well.
Additionally, a condition on $\beta$ and $\gamma$ is determined in a  Briot-Bouquet differential type subordination relation:  $p(z) + zp'(z)/(\beta p(z)+\gamma)^2  \in \mathcal{P}[D,E] $ implies $p\in \mathcal{P}[A,B]$.
As an application, we obtained some sufficient conditions for a normalized analytic function $f$ in  $\mathcal{S}^*[A,B]$. Kanas \cite{Kanas06} described the admissibility  condition for the function to map $\mathbb{D}$ on to region bounded by parabola and hyperbola. We prove our result by using the corresponding admissibility conditions for the Janowski functions with positive real part.

\section{Janowski Functions }
Let  $\psi(r,s,t;z)\colon\mathbb{C}^3\times \mathbb{D}\to\mathbb{C}$ be a function and let  $h$ be univalent in $\mathbb{D}$. An analytic function $p$  satisfying the   second-order differential subordination
\begin{equation} \label{1.1}
\psi(p(z), zp'(z), z^2p''(z);z) \prec h(z),
\end{equation}  is known as its \emph{solution}. The univalent function $q$ is  a dominant of the solutions of the differential subordination \eqref{1.1} if $p\prec q$ for all $p$ satisfying \eqref{1.1}. A dominant $\tilde{q}$ which satisfies $\tilde{q} \prec q$ for all dominant $q$ of \eqref{1.1} is known as \emph{best dominant} of \eqref{1.1} and it is unique upto a rotation.
Let $\mathcal{Q}$ be the class consisting of all analytic and injective functions $q$  on $\overline{\mathbb{D}} \setminus \textbf{E}(q)$, where
$\textbf{E}(q) = \{\xi \in \partial\mathbb{D} : \lim_{z \to \xi} q(z) = \infty\}$
such that $q'(\xi) \neq 0$ for $\xi \in \overline{\mathbb{D}}\setminus \textbf{E}(q)$.
Let $\Omega$ be a set in $\mathbb{C}$, $q \in \mathcal{Q}$ and $n$ be a positive integer. The class  $\Psi_n[\Omega,q]$ of admissible functions $\psi: \mathbb{C}^3 \times \mathbb{D} \to \mathbb{C}$ that satisfy the admissibility condition:
\begin{equation}  \label{adm}
\psi(r,s,t;z) \notin \Omega
\end{equation}
whenever\[r = q(\xi), \,s = m ~\xi~ q'(\xi)\,\,\mbox{and}\,\,\RE\left(\frac{t}{s} +1\right) ~\geq ~ m \RE\left(\frac{\xi q''(\xi)}{q'(\xi)} +1\right)\]
for $z \in \mathbb{D}, \xi \in \overline{\mathbb{D}} \setminus \textbf{E}(q)$ and $m \geq n \geq 1$.
In particular, let $\Psi_1[\Omega,q]=\Psi[\Omega,q]$. For more details, see \cite{Kanas,KanasLecko,  Kim, mm1, Seoudy}. For this class $\Psi_n[\Omega,q]$, the following result is well-known.

\begin{theorem}  \cite[Theorem 2.3b, p. 28]{mm} \label{thm 2.3b}
Let the function $\psi \in \Psi_n[\Omega,q]$ with $q(0) =a$. If the function $p \in \mathcal{H}[a,n]$ satisfies
\begin{equation} \label{mm 2.3b}
\psi(p(z),zp'(z),z^2p''(z);z)\in \Omega,
\end{equation}
then $p(z) \prec q(z)$.
\end{theorem}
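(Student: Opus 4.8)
The plan is to argue by contradiction, reducing the subordination $p \prec q$ to a single check of the admissibility condition \eqref{adm} through a boundary-point analysis. First I would suppose, to the contrary, that $p \not\prec q$. Since $p \in \mathcal{H}[a,n]$ with $p(0) = a = q(0)$ and $q$ is univalent, the failure of subordination forces $p(\mathbb{D}) \not\subseteq q(\mathbb{D})$. The central technical ingredient is the Miller--Mocanu boundary lemma (the companion lemma to Theorem~\ref{thm 2.3b} in \cite{mm}): if $p \not\prec q$, then there exist a point $z_0 \in \mathbb{D}$, a boundary point $\xi_0 \in \partial\mathbb{D} \setminus \textbf{E}(q)$, and a real number $m \geq n$ such that $p(z_0) = q(\xi_0)$, $z_0 p'(z_0) = m\,\xi_0 q'(\xi_0)$, and
\begin{equation*}
\RE\left(\frac{z_0 p''(z_0)}{p'(z_0)} + 1\right) \geq m\, \RE\left(\frac{\xi_0 q''(\xi_0)}{q'(\xi_0)} + 1\right).
\end{equation*}
Geometrically, one takes the smallest radius $r_0$ for which the image curve $p(\{|z| = r_0\})$ first meets $\partial q(\mathbb{D})$; at the contact point the two curves are internally tangent, and this tangency forces the derivative relation with a magnification factor $m \geq n$ arising from the order of vanishing of $p(z) - a$ at the origin.

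Second, with these points in hand, I would set $r = p(z_0)$, $s = z_0 p'(z_0)$ and $t = z_0^2 p''(z_0)$. Then $r = q(\xi_0)$ and $s = m\,\xi_0 q'(\xi_0)$, which match the first two requirements in \eqref{adm}. For the third, I note that
\begin{equation*}
\frac{t}{s} + 1 = \frac{z_0^2 p''(z_0)}{z_0 p'(z_0)} + 1 = \frac{z_0 p''(z_0)}{p'(z_0)} + 1,
\end{equation*}
so the second-derivative inequality supplied by the lemma is precisely the inequality $\RE(t/s + 1) \geq m\,\RE(\xi_0 q''(\xi_0)/q'(\xi_0) + 1)$ appearing in the admissibility condition. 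Hence the triple $(r,s,t;z_0)$ fulfils every hypothesis of \eqref{adm}.

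Third, since $\psi \in \Psi_n[\Omega,q]$, the admissibility condition \eqref{adm} yields $\psi(r,s,t;z_0) \notin \Omega$; that is, $\psi(p(z_0), z_0 p'(z_0), z_0^2 p''(z_0); z_0) \notin \Omega$. This contradicts hypothesis \eqref{mm 2.3b}, which requires $\psi(p(z), zp'(z), z^2 p''(z); z) \in \Omega$ for \emph{every} $z \in \mathbb{D}$. The contradiction forces $p \prec q$, as claimed.

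I expect the main obstacle to be the boundary lemma itself: establishing the existence of $z_0$ together with the three critical relations---especially the magnification $m \geq n$ and the second-order inequality---rests on a careful analysis of the first contact of the image curve with $\partial q(\mathbb{D})$ and the internal tangency there, which in turn uses the hypothesis $p \in \mathcal{H}[a,n]$ through the order of the expansion at the origin. Once that lemma is granted, the passage to the admissibility condition is a direct substitution and the theorem follows at once.
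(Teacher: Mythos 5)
Your argument is correct and is exactly the standard proof: the paper itself does not prove this statement (it is quoted from \cite[Theorem 2.3b, p.~28]{mm}), and the proof given there proceeds precisely as you describe, by contradiction via the Miller--Mocanu boundary contact lemma (Lemma 2.2d in \cite{mm}) followed by direct substitution of $(r,s,t;z_0)=(p(z_0),z_0p'(z_0),z_0^2p''(z_0);z_0)$ into the admissibility condition \eqref{adm}. The only point worth adding is the trivial case $p\equiv a$, which must be set aside before invoking the lemma since subordination then holds automatically.
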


We begin by describing the class of admissible function $\Psi_n[\Omega,q]$ when $q:\D \to \C$  is the function given by $q(z) = (1+Az)/(1+Bz)$ where  $-1 \leq B < A\leq1$. Note that $q(0) = 1$ and $E(q)\subset\{-1\}$. Clearly, the function $q$ is univalent in $\overline{\D} \setminus \textbf{E}(q)$. Therefore  $q \in \mathcal{Q}$
and the domain $q(\mathbb{D})$ is
\[\Delta\,= q(\mathbb{D}) =  \left\{w \in \mathbb{C}: \left|\frac{w-1}{A-Bw}\right| < 1\right\}.\]
For $ \varsigma = \E$ and $0 \leq \theta \leq 2 \pi$, we have
\[ q(\varsigma) = \frac{1+Ae^{i \theta}}{1+Be^{i \theta}}, \,\,q'(\varsigma) = \frac{A-B}{(1+Be^{i \theta})^2} \,\,\mbox{and}\,\, q''(\varsigma) = \frac{2B(A-B)}{-(1+Be^{i \theta})^3}.\]
and  a simple calculation yields
\[ \RE \left(\frac{\varsigma q''(\varsigma)}{q'(\varsigma)} +1\right) = \frac{(1-B^2)m}{1+B^2 +2B \cos\theta}.\]
Thus we get  the following   \textbf{condition of admissibility}: $\psi(r,s,t;z) \notin \Omega$  whenever $(r,s,t;z) \in \mbox{dom}\, \psi$ and
\begin{equation} \label{coa}
r = \frac{1+Ae^{i \theta}}{1+Be^{i \theta}},  \,\, s = \frac{ m (A-B) \E}{(1+Be^{i \theta})^2} \,\,\mbox{and}\,\,  \RE \left(\frac{t}{s} +1\right) \geq \frac{1-B^2}{1+B^2 +2B \cos\theta}
\end{equation}
where $0 < \theta < 2\pi$ and $m \geq n \geq 1$ and the class of all such functions $\psi$ satisfying  the admissibility condition is denoted by $\Psi(\Omega;A,B)$.

When $q(z)=(1+Az)/(1+Bz)$,  Theorem \ref{thm 2.3b} specializes to the following  first order differential subordination  result:
\begin{theorem}\label{th1.3}
Let $p \in \mathcal{H}[1,\,n]$ with $n \in \mathbb{N}$. Let $\Omega$ be a subset  of $ \mathbb{C}$ and $\psi: \mathbb{C}^2 \times \mathbb{D} \to \mathbb{C}$ with domain $D$ satisfy $\psi(r,s;z) \notin \Omega$ for all  $z \in  \mathbb{D}$, where $r $ and $ s$ are given by \eqref{coa}.
If $(p(z),zp'(z);z) \in D$ and $\psi(p(z),zp'(z);z)\in \Omega$ for $z \in \mathbb{D}$, then
$p\in \mathcal{P}[A,B].$
\end{theorem}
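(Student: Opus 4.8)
The plan is to deduce this first-order statement directly from the general second-order machinery of Theorem \ref{thm 2.3b}, applied to the particular dominant $q(z) = (1+Az)/(1+Bz)$, for which the admissibility condition has already been reduced to the explicit form \eqref{coa}. The guiding observation is that a first-order admissible function may be viewed as a second-order one that simply disregards its third argument, so that the whole content of Theorem \ref{th1.3} is a transcription of \eqref{coa} into the hypotheses of Theorem \ref{thm 2.3b}.

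First I would introduce the auxiliary function $\widetilde{\psi}\colon \mathbb{C}^3 \times \mathbb{D} \to \mathbb{C}$ defined by $\widetilde{\psi}(r,s,t;z) := \psi(r,s;z)$, extending the given $\psi$ so that it is constant in the variable $t$. I then claim $\widetilde{\psi} \in \Psi_n[\Omega,q]$. Recall that membership there requires $\widetilde{\psi}(r,s,t;z) \notin \Omega$ whenever $r = q(\xi)$, $s = m\,\xi\, q'(\xi)$ and $\RE(t/s+1) \geq m\,\RE(\xi q''(\xi)/q'(\xi)+1)$, for $\xi = \E \in \overline{\mathbb{D}}\setminus \textbf{E}(q)$ and $m \geq n \geq 1$. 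Since $\widetilde{\psi}$ does not depend on $t$, the inequality constraining $t$ is irrelevant to its value: it only certifies that admissible triples exist, while $\widetilde{\psi}(r,s,t;z) = \psi(r,s;z)$ is unaffected by the choice of $t$. Inserting the values of $q(\xi)$ and $q'(\xi)$ computed in the excerpt into $r = q(\xi)$ and $s = m\,\xi\, q'(\xi)$ reproduces exactly the expressions for $r$ and $s$ recorded in \eqref{coa}. Thus the hypothesis of the theorem, namely that $\psi(r,s;z) \notin \Omega$ for all $r,s$ of the form \eqref{coa}, is precisely the assertion $\widetilde{\psi} \in \Psi_n[\Omega,q]$.

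With admissibility established, I would invoke Theorem \ref{thm 2.3b} with $a = q(0) = 1$. Because $p \in \mathcal{H}[1,n]$ and, by hypothesis,
\[ \widetilde{\psi}(p(z), zp'(z), z^2 p''(z); z) = \psi(p(z), zp'(z); z) \in \Omega, \]
the theorem yields $p(z) \prec q(z) = (1+Az)/(1+Bz)$. Finally, since $q$ is univalent on $\mathbb{D}$ with $q(0)=1$ and $q(\mathbb{D}) = \Delta$ lies in the open right half-plane for $-1 \leq B < A \leq 1$, the subordination $p \prec q$ forces $\RE p(z) > 0$, so $p \in \mathcal{P}$; combined with $p \prec q$ this is exactly the conclusion $p \in \mathcal{P}[A,B]$.

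The argument is in essence a formal reduction, so I do not anticipate a genuine obstacle. The only point demanding care is the bookkeeping in the second step: one must confirm that extending $\psi$ to be independent of $t$ truly renders the third inequality in \eqref{adm} vacuous, and that the ranges $0 < \theta < 2\pi$ and $m \geq n \geq 1$ quantified in \eqref{coa} coincide with those in the definition of $\Psi_n[\Omega,q]$. Once this correspondence is verified, the conclusion follows at once.
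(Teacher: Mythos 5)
Your proposal is correct and follows exactly the route the paper intends: the paper states Theorem \ref{th1.3} as a direct specialization of Theorem \ref{thm 2.3b} to $q(z)=(1+Az)/(1+Bz)$, with the admissibility condition already reduced to \eqref{coa}, and your extension of $\psi$ to a $t$-independent $\widetilde{\psi}$ is the standard (implicit) bookkeeping that makes this specialization precise.
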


We investigate  functions that naturally arise  in the investigation of univalent functions to be admissible.  In the first result, we show that $\psi(r,s;z) = 1+ \beta {s}/{r^k}$ is an admissible function.

\begin{theorem} \label{Thm 2.1}
Let  $\beta \neq 0$,  $-1 \leq B < A \leq 1$ and  $-1 \leq E < D \leq 1$ satisfy the condition
\begin{itemize}
\item[(i)] $|\beta|(A-B) \geq (D-E) (1+|A|)^k(1+|B|)^{2-k} + |E \beta(A-B)|$; $( k = 0,1,2)$ or
\item[(ii)] $|\beta|(A-B) (1-|B|)^{k-2}\geq (D-E) (1+|A|)^k + |E \beta(A-B)|(1+|B|)^{k-2}$; $ (k > 2).$
\end{itemize} If  $p$ is analytic in $\mathbb{D}$ and
\begin{equation*}
1+ \beta \frac{zp'(z)}{p^k(z)} \in \mathcal{P}[D,E];\quad k \in \mathbb{N} \cup \{0\},
\end{equation*}
then $p\in\mathcal{P}[A,B]$.
\end{theorem}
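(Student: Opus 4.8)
The plan is to recognize the hypothesis as a first-order differential subordination and apply Theorem \ref{th1.3} with the choice $\psi(r,s;z) = 1 + \beta s/r^{k}$, so that $\psi(p(z),zp'(z);z) = 1 + \beta zp'(z)/p^{k}(z)$. Writing $\Omega = \{w\in\mathbb{C} : |w-1| < |D-Ew|\}$ for the image of $\mathbb{D}$ under $(1+Dz)/(1+Ez)$, the hypothesis says exactly that $\psi(p(z),zp'(z);z)\in\Omega$, and since $p(0)=1$ we have $p\in\mathcal{H}[1,n]$ (the requirement $p(z)\neq0$ makes $\psi$ well defined for $k\ge1$). By Theorem \ref{th1.3} it then suffices to verify the admissibility condition, namely that $\psi(r,s;z)\notin\Omega$ whenever $r,s$ are given by \eqref{coa}. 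Because $\psi(r,s;z)-1 = \beta s/r^{k}$ and $D-E\psi(r,s;z) = (D-E)-E\beta s/r^{k}$, this reduces to proving the single inequality
\begin{equation*}
\left|\beta\frac{s}{r^{k}}\right| \ \geq\ \left|(D-E)-E\beta\frac{s}{r^{k}}\right|.
\end{equation*}

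Next I would compute the relevant modulus from \eqref{coa}. Substituting $r=(1+Ae^{i\theta})/(1+Be^{i\theta})$ and $s=m(A-B)e^{i\theta}/(1+Be^{i\theta})^{2}$ gives
\begin{equation*}
\left|\frac{s}{r^{k}}\right| \ =\ \frac{m(A-B)\,|1+Be^{i\theta}|^{\,k-2}}{|1+Ae^{i\theta}|^{\,k}} \ =:\ u .
\end{equation*}
Applying the triangle inequality on the right, $|(D-E)-E\beta s/r^{k}|\le (D-E)+|E\beta|\,u$ (using $D-E>0$), reduces the target inequality to
\begin{equation*}
\bigl(|\beta|-|E\beta|\bigr)\,u \ =\ |\beta|(1-|E|)\,u \ \geq\ D-E .
\end{equation*}
Since the left-hand side is nondecreasing in $u$, it is enough to bound $u$ below by its minimum over the admissible range $m\ge1$, $0<\theta<2\pi$.

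To get that lower bound I would take $m=1$ and use the elementary estimates $1-|A|\le|1+Ae^{i\theta}|\le1+|A|$ and $1-|B|\le|1+Be^{i\theta}|\le1+|B|$, replacing the numerator by its minimum and the denominator by its maximum. The sign of the exponent $k-2$ forces precisely the case split in the statement: for $k=0,1,2$ the exponent is nonpositive, so $|1+Be^{i\theta}|^{\,k-2}\ge(1+|B|)^{k-2}$ and hence $u\ge (A-B)(1+|B|)^{k-2}/(1+|A|)^{k}$; for $k>2$ the exponent is positive, so $|1+Be^{i\theta}|^{\,k-2}\ge(1-|B|)^{k-2}$ and hence $u\ge (A-B)(1-|B|)^{k-2}/(1+|A|)^{k}$.

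Finally I would feed these bounds into $|\beta|(1-|E|)\,u\ge D-E$ and check they are guaranteed by the hypotheses. Writing $|E\beta(A-B)|=|E||\beta|(A-B)$ and rearranging (i) gives $|\beta|(1-|E|)(A-B)\ge(D-E)(1+|A|)^{k}(1+|B|)^{2-k}$, which after dividing by $(1+|A|)^{k}(1+|B|)^{2-k}$ is exactly the estimate needed when $k\le2$; condition (ii), which replaces $(1-|B|)^{k-2}$ by the larger $(1+|B|)^{k-2}$ in the subtracted term, \emph{a fortiori} implies the estimate needed when $k>2$. The main thing to watch, and the only genuinely delicate point, is the bookkeeping in this last step: the direction of the bound on $|1+Be^{i\theta}|^{\,k-2}$ as $k$ crosses the threshold $2$, and the matching of the factor $(1\pm|B|)^{k-2}$ with the right side of (i)--(ii); everything else is a routine triangle-inequality argument.
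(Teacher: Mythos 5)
Your proof is correct and follows essentially the same route as the paper's: both apply Theorem \ref{th1.3} with $\psi(r,s;z)=1+\beta s/r^{k}$ and verify the admissibility condition $\psi(r,s;z)\notin\Omega$ by the same triangle-inequality estimates, with the identical case split on the sign of $k-2$. The only cosmetic difference is that you isolate $u=|s/r^{k}|$ and minimize it over $m\geq 1$ and $\theta$ directly, whereas the paper forms the full quotient $\phi(m)$ and argues monotonicity in $m$; the underlying computation is the same.
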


\begin{proof}
Let  $ \Omega = \{w\in \mathbb{C} : \left|(w-1)/(D-Ew)\right| < 1\}$.
The function $\psi: (\C \setminus \{0\}) \times \C \times \D \to \C$ is defined as
\[\psi(r,s;z) = 1+ \beta \frac{s}{r^k}\]
where $k$ is a non-negative integer. Using the values of  $r,s$ from \eqref{coa}, we have
\begin{equation*}
\psi(r,s;z) = 1+ \beta \frac{m (A-B) e^{i \theta}(1+Be^{i \theta})^{k} }{(1+A e^{i \theta})^k (1+Be^{i \theta})^{2}}.
\end{equation*}
By making use of Theorem \ref{th1.3}, the desired subordination is showed if we prove $\psi \in \Psi[\Omega;A,B]$. For this purpose, set \[\boldsymbol{\chi}(w, \,D, \,E)=\left|\frac{w-1}{D-Ew}\right|.\]

(i) When  $ k = 0,1,2$. A simple calculation gives
\begin{align*}
\left|\boldsymbol{\chi}(\psi(r,s;z), \,D, \,E)\right| &= \left|\frac{\beta m (A-B) e^{i \theta}}{(D-E)(1+Ae^{i \theta})^k(1+Be^{i \theta})^{2-k} -E \beta m e^{i \theta}(A-B)}\right|\\
 &\geq \frac{|\beta| m (A-B)}{(D-E)|(1+Ae^{i \theta})^k| |(1+Be^{i \theta})^{2-k}| + |E \beta m (A-B)|}\\
 & \geq  \frac{|\beta|  m(A-B)}{(D-E)(1+|A|)^k (1+|B|)^{2-k} + m|E \beta (A-B)|} =: \phi(m).
 \end{align*}
Observe  that  the function $\phi(m)$ is an increasing function for $m \geq 1$ by first derivative test. Hence the minimum value of  $\phi(m)$ occurs at $m=1$. Thus, the last inequality becomes
\[\left|\boldsymbol{\chi}(\psi(r,s;z), \,D, \,E)\right|\geq \phi(1)\geq 1\]
if the  inequality  $ |\beta| (A-B) \geq (D-E)(1+|A|)^k(1+|B|)^{2-k} + |E \beta (A-B)|$ holds.
Therefore, $\psi(r,s;z) \notin \Omega$ which implies $\psi \in \Psi(\Omega;A,B)$ and we get the desired  $p \prec q$.

(ii) When  $k >2$, we note that
 \begin{align*}
 \left|\boldsymbol{\chi}(\psi(r,s;z), \,D, \,E)\right| &= \left|\frac{\beta m e^{i \theta}(A-B)(1+Be^{i \theta})^{k-2}}{(D-E)(1+Ae^{i \theta})^k -E \beta m e^{i \theta}(A-B)(1+Be^{i \theta})^{k-2}}\right| \\
  &\geq \frac{|\beta| m (A-B)(1-|B|)^{k-2}}{(D-E)(1+|A|)^k  + |E \beta m (A-B)|(1+|B|)^{k-2}} =: \phi(m).
\end{align*}
As previous case, note that  $\phi(m) \geq \phi(1)$. Hence the last inequality is written as
\[\left|\boldsymbol{\chi}(\psi(r,s;z), \,D, \,E)\right|\geq  1\]
provided $|\beta| (A-B)(1-|B|)^{k-2} \geq (D-E)(1+|A|)^k + |E \beta (A-B)|(1+|B|)^{k-2}$. Therefore,
we get $p \prec q$.
\qedhere
\end{proof}

\begin{remark}
When $k = 0 \, \, \mbox{and}\,\, 1$,  Theorem \ref{Thm 2.1} reduces to \cite[Lemma 2.1, p. 2]{Ali} and \cite[Lemma 2.10, p. 6]{Ali} respectively.
When  $k = 2$ and $ \beta = 1$,  Theorem \ref{Thm 2.1}  simplifies to \cite[Lemma 2.6, p. 5]{Ali}.
\end{remark}

For a positive integer $k$, next theorem gives a conditions on $\beta$ so that the differential subordination  $1+ \beta (zp'(z))^2/p^k(z)\in \mathcal{P}[D,E]$  implies $p\in \mathcal{P}[A,B]$.

\begin{theorem} \label{s^2}
  Suppose  $k$ a non-negative integer, $\beta \neq 0$, $-1 \leq B < A \leq 1$ and $-1 \leq E <0 < D \leq 1$ satisfy either
\begin{itemize}
\item[(i)] for $0 \leq k < 4$,
\begin{align}\label{eq2.51}
|\beta|(A-B)^2 \geq (D-E)(1+|A|)^k(1+|B|)^{4-k}+|E \beta (A-B)^2|,
\end{align}
or
\item[(ii)]for $k \geq 4$,
\begin{align}\label{eq2.52}
|\beta|(A-B)^2(1-|B|)^{k-4} \geq (D-E)(1+|A|)^k + |E \beta(A-B)^2|(1+|B|)^{k-4}.
\end{align}
\end{itemize} If $p$ is analytic in $\mathbb{D}$ and
$1+ \beta (zp'(z))^2/p^k(z) \in \mathcal{P}[D,E]$,
then $p\in\mathcal{P}[A,\,B]$.
\end{theorem}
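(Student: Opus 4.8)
The plan is to follow the template of Theorem~\ref{Thm 2.1} and reduce the statement to the first-order admissibility criterion of Theorem~\ref{th1.3}. I would set $\Omega = \{w \in \mathbb{C} : |(w-1)/(D-Ew)| < 1\}$, so that the hypothesis $1 + \beta (zp'(z))^2/p^k(z) \in \mathcal{P}[D,E]$ becomes precisely $\psi(p(z), zp'(z); z) \in \Omega$ for the function $\psi(r,s;z) = 1 + \beta s^2/r^k$. By Theorem~\ref{th1.3} it then suffices to check the admissibility condition $\psi(r,s;z) \notin \Omega$ for all $r,s$ of the form \eqref{coa}; writing $\boldsymbol{\chi}(w,D,E) = |(w-1)/(D-Ew)|$ as in the proof of Theorem~\ref{Thm 2.1}, this amounts to showing $|\boldsymbol{\chi}(\psi(r,s;z), D, E)| \geq 1$.

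First I would substitute $r = (1+Ae^{i\theta})/(1+Be^{i\theta})$ and $s = m(A-B)e^{i\theta}/(1+Be^{i\theta})^2$ into $\psi$. The only genuine difference from Theorem~\ref{Thm 2.1} is that squaring $s$ replaces the factor $m(A-B)e^{i\theta}/(1+Be^{i\theta})^2$ by its square; carrying the $r^{-k}$ through and simplifying gives
\[ \psi(r,s;z) - 1 = \beta\,\frac{m^2(A-B)^2\,e^{2i\theta}\,(1+Be^{i\theta})^{k-4}}{(1+Ae^{i\theta})^k}. \]
Thus the exponent $2$ on $s$ produces $m^2$, $(A-B)^2$ and the shift of the power of $(1+Be^{i\theta})$ from $2-k$ to $k-4$, while $e^{2i\theta}$ has modulus one and plays no role. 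The sign of the exponent $k-4$ determines whether $(1+Be^{i\theta})^{k-4}$ sits in the numerator or the denominator, and this is exactly the origin of the two cases $0 \leq k < 4$ and $k \geq 4$.

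In each case I would form $\boldsymbol{\chi} = (\psi - 1)/(D - E\psi)$, clear the denominator $(1+Ae^{i\theta})^k$ (and, in case (i), also $(1+Be^{i\theta})^{4-k}$), and use $D - E > 0$ together with the triangle inequality $|(D-E) - E(\psi - 1)| \leq (D-E) + |E|\,|\psi - 1|$ to obtain a lower bound of the form
\[ |\boldsymbol{\chi}(\psi(r,s;z), D, E)| \;\geq\; \frac{C_1\,m^2}{C_2 + C_3\,m^2} =: \phi(m), \]
with positive constants $C_1, C_2, C_3$ obtained by replacing every factor $|1+Ae^{i\theta}|$ by its upper bound $1+|A|$, every factor $|1+Be^{i\theta}|$ occurring in a denominator by $1+|B|$, and every factor $|1+Be^{i\theta}|$ occurring in the numerator by its lower bound $1-|B|$. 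For $0 \leq k < 4$ this yields $C_1 = |\beta|(A-B)^2$, $C_2 = (D-E)(1+|A|)^k(1+|B|)^{4-k}$ and $C_3 = |E\beta(A-B)^2|$; for $k \geq 4$ it yields $C_1 = |\beta|(A-B)^2(1-|B|)^{k-4}$, $C_2 = (D-E)(1+|A|)^k$ and $C_3 = |E\beta(A-B)^2|(1+|B|)^{k-4}$.

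A short monotonicity step then closes the argument: since $\phi'(m) = 2C_1 C_2 m/(C_2 + C_3 m^2)^2 > 0$, the function $\phi$ is increasing on $m \geq 1$, so its minimum is $\phi(1) = C_1/(C_2 + C_3)$, and the inequality $\phi(1) \geq 1$, i.e.\ $C_1 \geq C_2 + C_3$, is exactly \eqref{eq2.51} in case (i) and \eqref{eq2.52} in case (ii). Hence $|\boldsymbol{\chi}| \geq 1$, so $\psi(r,s;z) \notin \Omega$ and $\psi \in \Psi(\Omega;A,B)$, and Theorem~\ref{th1.3} gives $p \in \mathcal{P}[A,B]$. The step I expect to require the most care is the estimation in the denominator: in case (ii) the same factor $|(1+Be^{i\theta})^{k-4}|$ appears in both the numerator and the denominator of $\boldsymbol{\chi}$ and must be bounded in opposite directions, below by $(1-|B|)^{k-4}$ and above by $(1+|B|)^{k-4}$. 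This is legitimate because each estimate is applied at a fixed $\theta$ before passing to the worst case, exactly as in the proof of Theorem~\ref{Thm 2.1}; the appearance of $m^2$ in place of $m$ is the only other new feature, and it affects only the (still elementary) monotonicity computation.
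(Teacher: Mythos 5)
Your proposal is correct and follows essentially the same route as the paper's own proof: the same choice of $\Omega$ and $\psi(r,s;z)=1+\beta s^2/r^k$, the same substitution of \eqref{coa}, the same triangle-inequality bounds leading to $\phi(m)=C_1m^2/(C_2+C_3m^2)$ with exactly the constants the paper obtains in each of the two cases, and the same monotonicity step reducing everything to $\phi(1)\geq 1$, i.e.\ to \eqref{eq2.51} or \eqref{eq2.52}. Nothing further is needed.
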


\begin{proof}
By considering  the  domain $\Omega$ as in Theorem \ref{Thm 2.1} and   the analytic function $\psi(r,s;z) = 1+ \beta {s^2}/{r^k}$ where $k$ is non-negative integer, we need to show $\psi \in \Psi[\Omega,\, A,\,B]$.

(i)  Let $0 \leq k <4$.  In view of (\ref{coa}), we note that

\begin{equation*}
\psi(r,s;z) = 1+ \beta \frac{m^2 (A-B)^2 e^{2i \theta} (1+B \E)^k}{(1+B e^{i \theta})^4 (1+A \E)^k}
\end{equation*}
so that
\begin{align*}
\left|\boldsymbol{\chi}(\psi(r,s;z), \,D, \,E)  \right|&= \left|\frac{\beta m^2 (A-B)^2 e^{2i \theta}}{(D-E)(1+A \E)^k(1+B e^{i \theta})^{4-k} -E \beta m^2 (A-B)^2 e^{2i \theta}}\right|\\
&= \frac{|\beta| m^2(A-B)^2}{|(D-E)(1+A \E)^k(1+B e^{i \theta})^{4-k} -E \beta m^2(A-B)^2 e^{2i \theta}|}\\
& \geq \frac{|\beta| m^2 (A-B)^2}{(D-E)(1+|A|)^k(1+|B|)^{4-k} + m^2|E \beta (A-B)^2|} =: \phi(m).
\end{align*}
A calculation shows that $\phi'(m) > 0$ for $m \geq 1$. Therefore  $\phi(m) \geq \phi(1)$. The last inequality simplifies to
$\left|\boldsymbol{\chi}(\psi(r,s;z), \,D, \,E) \right| \geq  1$ whenever the inequality \eqref{eq2.51}  holds.  As a conclusion it is noted that $\psi(r,s;z) \notin \Omega$. Thus we get the required subordination.

(ii)  Let $k \geq 4$. Proceeding as in (i),  we have
 \begin{equation*}
 \psi(r,s,t;z) = 1+ \frac{ \beta m^2 (A-B)^2 e^{2 i \theta} (1+B\E)^{k-4}}{(1+A\E)^k}.
 \end{equation*}
so that
 \begin{align*}
 \left|\boldsymbol{\chi}(\psi(r,s;z), \,D, \,E)\right| &= \left|\frac{\beta m^2 (A-B)^2 e^{2 i \theta} (1+B\E)^{k-4}}{(D-E)(1+A \E)^k - E \beta m^2 (A-B)^2 (1+B \E)^{k-4} e^{2 i \theta}}\right|\\
  &\geq \frac{|\beta|m^2 (A-B)^2|(1+B\E)^{k-4}|}{|(1+ A\E)^k(D-E)| +m^2|E \beta  (A-B)^2 e^{2 i \theta}||(1+B\E)^{k-4}|}\\
  &\geq \frac{|\beta|m^2 (A-B)^2(1-|B|)^{k-4}}{(1+ |A|)^k(D-E) +m^2|E \beta  (A-B)^2|(1+|B|)^{k-4}}=: \phi(m).
  \end{align*}
A calculation shows that $\phi(m)$ is an increasing function for $m \geq 1$ and thus has minimum value at $m = 1$. As similar  analysis of previous case, we get $p\in\mathcal{P}[A,B]$.
\qedhere
\end{proof}

In \cite{sivaprasad}, a lower bound on $\beta$ is determined such that  $p(z) + \beta zp'(z)/p^2(z) \prec \sqrt{1+z}$ implies $p(z) \prec \sqrt{1+z}$. Recently,  Sharma and Ravichandran \cite{Ravi15JKMS} established  similar type subordination for analytic functions associated to Cardioid. Motivated by this work,  the condition on $\beta$ is computed so that   $p(z) + \beta zp'(z)/p^2(z) \in \mathcal{P}[D,E]$  implies $p\in\mathcal{P}[A,B]$.

\begin{theorem}\label{2.7}
Suppose  $-1\leq B < A \leq 1$ and   $-1 \leq E <D \leq1$ satisfy
\begin{equation}\label{th2.3}\begin{split}
(A-B) (|\beta|(1-|B|)-(1+|A|)^2)& \geq (1+|A|)^2 ((D-E) + |DB-EA|) \\   &\quad{}+|E\beta(A-B)|(1+|B|).\end{split}
\end{equation}
If $p$ is analytic in $\mathbb{D}$ and
$p(z) + \beta  { zp'(z) }/{p^2(z)}  \in \mathcal{P}[D,E]$,
then $p \in \mathcal{P}[A,B]$.

\end{theorem}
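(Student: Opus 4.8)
The plan is to follow the admissibility-function template established in the proofs of Theorem \ref{Thm 2.1} and Theorem \ref{s^2}. I take $\Omega = \{w \in \mathbb{C} : |(w-1)/(D-Ew)| < 1\}$, the image domain of $\mathcal{P}[D,E]$, and set $\psi(r,s;z) = r + \beta s/r^2$. By Theorem \ref{th1.3} it suffices to verify that $\psi \in \Psi(\Omega;A,B)$, that is, that $\psi(r,s;z) \notin \Omega$ whenever $r$ and $s$ take the admissibility values in \eqref{coa}. As before, this reduces to showing $\boldsymbol{\chi}(\psi(r,s;z),D,E) = |(\psi - 1)/(D - E\psi)| \geq 1$.

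First I would substitute $r = (1+Ae^{i\theta})/(1+Be^{i\theta})$ and $s = m(A-B)e^{i\theta}/(1+Be^{i\theta})^2$; a short simplification gives $\psi(r,s;z) = (1+Ae^{i\theta})/(1+Be^{i\theta}) + \beta m(A-B)e^{i\theta}/(1+Ae^{i\theta})^2$. Then I compute $\psi - 1$ and $D - E\psi$ separately, placing each over the common denominator $(1+Be^{i\theta})(1+Ae^{i\theta})^2$, so that these denominators cancel in the quotient $(\psi-1)/(D-E\psi)$. Unlike the earlier theorems, where the constant term of $\psi$ was exactly $1$ and $\psi - 1$ collapsed to the single term $\beta s/r^k$, here the numerator of $\psi - 1$ carries the two-term factor $(1+Ae^{i\theta})^2 + \beta m(1+Be^{i\theta})$, while the numerator of $D - E\psi$ acquires the extra cross-term $(DB-EA)e^{i\theta}$ coming from $D - Er$. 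This cross-term is precisely the source of the $|DB-EA|$ contribution appearing in the hypothesis \eqref{th2.3}.

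Next I would bound the quotient from below: apply the reverse triangle inequality to the numerator together with $|1+Be^{i\theta}| \geq 1-|B|$ and $|1+Ae^{i\theta}| \leq 1+|A|$ to obtain $(A-B)[|\beta|m(1-|B|) - (1+|A|)^2]$ as a lower bound for its modulus, and apply the ordinary triangle inequality to the denominator with the same elementary estimates to obtain $[(D-E)+|DB-EA|](1+|A|)^2 + |E\beta|m(A-B)(1+|B|)$ as an upper bound. This yields $\boldsymbol{\chi} \geq \phi(m)$, where $\phi$ is the ratio of these two expressions. Writing $\phi(m) = (am-b)/(c+dm)$ with non-negative $a,b,c,d$, a direct derivative computation gives $\phi'(m) = (ac+bd)/(c+dm)^2 \geq 0$, so $\phi$ is increasing on $m \geq 1$ and attains its minimum at $m = 1$. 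Evaluating at $m=1$, the inequality $\phi(1) \geq 1$ is exactly the stated condition \eqref{th2.3}, which forces $\boldsymbol{\chi} \geq 1$ and hence $\psi(r,s;z) \notin \Omega$. Thus $\psi \in \Psi(\Omega;A,B)$ and Theorem \ref{th1.3} delivers $p \in \mathcal{P}[A,B]$.

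The main obstacle I anticipate is the combining step: because $\psi - 1$ now has two genuinely different terms (rather than the single monomial of the previous results), the reverse-triangle lower bound on its numerator is only useful when $|\beta|(1-|B|) \geq (1+|A|)^2$, i.e. when the bracketed quantity $|\beta|m(1-|B|) - (1+|A|)^2$ stays non-negative. Verifying that the hypothesis \eqref{th2.3} indeed guarantees this positivity, and checking that the crude $|1+Ae^{i\theta}|^2 \leq (1+|A|)^2$ type estimates do not over-weaken the bound, is the delicate point; the appearance of the $|DB-EA|$ term, absent from Theorems \ref{Thm 2.1} and \ref{s^2}, must be tracked carefully through these inequalities so that $\phi(1) \geq 1$ reproduces exactly the asserted algebraic condition.
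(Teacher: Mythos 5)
Your proposal is correct and follows essentially the same route as the paper: the same $\Omega$ and $\psi(r,s;z)=r+\beta s/r^2$, the same algebraic reduction of $(\psi-1)/(D-E\psi)$ over the common denominator $(1+Ae^{i\theta})^2(1+Be^{i\theta})$ producing the factor $(1+Ae^{i\theta})^2+\beta m(1+Be^{i\theta})$ and the cross-term $(DB-EA)e^{i\theta}$, the same triangle-inequality bounds, and the same monotonicity of $\phi(m)=(am-b)/(c+dm)$ reducing everything to $\phi(1)\geq 1$, which is exactly \eqref{th2.3}. The positivity concern you raise is handled automatically, since the right-hand side of \eqref{th2.3} is positive (as $D>E$), forcing $|\beta|(1-|B|)>(1+|A|)^2$.
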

\begin{proof}
Consider  the domain $\Omega$ as in Theorem \ref{Thm 2.1}. The analytic function $\psi: \C \setminus \{0\} \times \C \times  \D \to \D$  is defined as $\psi(r,s;z) = r + \beta {s}/{r^2}$.
For required subordination, we need to show $\psi(r,s,t,z) \notin \Omega$. For the values of $r,s$ in \eqref{coa}, we have
\[\psi(r,s;z) =\frac{(1+A\E)^3+\beta m \E(A-B)(1+B \E)}{(1+A \E)^2 (1+B \E)}\]
so that
\begin{align*}
\left|\boldsymbol{\chi}(\psi(r,s;z), \,D, \,E)\right| &= \frac{(A-B)|\beta m(1+B\E)+(1+A\E)^2|}{\splitfrac{|(1+A \E)^2 (D(1+B\E) - E((1+A\E)) -E \beta m \E(A-B)}{(1+B\E)|}}\\
&=\frac{(A-B)|\beta m(1+B\E)+(1+A\E)^2|}{\splitfrac{|(1+A \E)^2 ((D-E) + (DB-EA)\E) -E\beta m \E(A-B)}{(1+B\E)|}}\\
& \geq \frac{(A-B)|\beta m(1+B\E)|-|(1+A\E)^2|}{\splitfrac{|(1+A \E)^2 ((D-E) + (DB-EA)\E)| +|E\beta m \E(A-B)}{(1+B\E)|}}\\
& \geq \frac{(A-B)(|\beta| m(1-|B|)-(1+|A|)^2)}{(1+|A|)^2 ((D-E) + |DB-EA|) +m |E\beta (A-B)|(1+|B|)} \\
&= : \phi(m).
\end{align*}
The function $\phi(m)$ is an increasing for $m \geq 1$. So the function $\phi(m)$ attains its minimum value  at $m=1$. Then
$\left|\boldsymbol{\chi}(\psi(r,s;z), \,D, \,E)\right|\geq \phi(1)\geq 1$
provided the inequality \eqref{th2.3} holds.
By  Theorem \ref{th1.3}, we have  $\psi \in \Psi(\Omega;A,B)$  and this proves the result. \end{proof}

In \cite{Kanika16BIMS}, authors derived condition on $\alpha$ and $\beta$ so that subordination $(1-\alpha) p(z) +  \alpha p^2(z) + \beta zp'(z)/ p^k(z) \prec 1+\frac{4}{3}z+\frac{2}{3}z^2$ $(k=0,\,1)$ implies $p(z) \prec 1+\frac{4}{3}z+\frac{2}{3}z^2$. In view of this work, next two theorems  give a relation between  $\alpha$ and $\beta$ so that  $(1-\alpha) p(z) +  \alpha p^2(z) + \beta zp'(z)/p^k(z) \in \mathcal{P}[D,E]$ (where $k=0,1$) implies $p\in \mathcal{P}[A,B].$

\begin{theorem} \label{ap'}
Let    $-1\leq B < A \leq 1$, $-1 \leq E < 0 <D \leq1$, $\beta \neq 0$ and  $0 \leq \alpha \leq 1$. Assume that
\begin{equation}\label{eQ2.9}
\begin{split}
   (A-B)(|\beta| -(1+|B|) - \alpha(1+|A|)) \geq & (1+|B|)(D(1+|B|)-E(1- \alpha)\\ &(1+|A|))
  -E\alpha(1+|A|)^2+|E \beta (A-B)|.
\end{split}
   \end{equation}
If $p$ is analytic in $\mathbb{D}$ and
$(1-\alpha) p(z) +  \alpha p^2(z) + \beta zp'(z) \in \mathcal{P}[D,E]$, then $p  \in \mathcal{P}[A,B].$
\end{theorem}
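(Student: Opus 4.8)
The plan is to recast the hypothesis as an admissibility statement and invoke Theorem~\ref{th1.3}. Take $\Omega = \{w \in \C : |(w-1)/(D-Ew)| < 1\}$ as in the preceding proofs and set $\psi(r,s;z) = (1-\alpha)r + \alpha r^2 + \beta s$, which is precisely the operator in the statement (the $k=0$ case) evaluated at $r = p(z)$, $s = zp'(z)$. It then suffices to prove $\psi \in \Psi(\Omega;A,B)$, that is, $\psi(r,s;z) \notin \Omega$ whenever $r,s$ are given by the admissibility parametrization \eqref{coa}; equivalently, $\boldsymbol{\chi}(\psi(r,s;z),D,E) \ge 1$, where $\boldsymbol{\chi}(w,D,E) = |(w-1)/(D-Ew)|$.

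First I would substitute $r = (1+A\E)/(1+B\E)$ and $s = m(A-B)\E/(1+B\E)^2$ from \eqref{coa} and clear the common denominator $(1+B\E)^2$, obtaining
\[
\psi(r,s;z) = \frac{(1-\alpha)(1+A\E)(1+B\E) + \alpha(1+A\E)^2 + \beta m (A-B)\E}{(1+B\E)^2}.
\]
The decisive algebraic step, and the one I expect to be the main obstacle, is simplifying the numerator of $\psi(r,s;z)-1$. Expanding $(1-\alpha)(1+A\E)(1+B\E) + \alpha(1+A\E)^2 - (1+B\E)^2$ in powers of $\E$, the constant term cancels and the linear and quadratic coefficients each factor through $A-B$, yielding the clean identity
\[
\psi(r,s;z) - 1 = \frac{(A-B)\E\,[\,\beta m + (1+\alpha) + (B+\alpha A)\E\,]}{(1+B\E)^2}.
\]
Coaxing the linear coefficient into the form $(A-B)(1+\alpha)$ and the quadratic into $(A-B)(B+\alpha A)$ is the crux; everything after it is estimation.

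With this identity in hand I would bound the numerator from below by the reverse triangle inequality, using $|(1+\alpha)+(B+\alpha A)\E| \le (1+|B|)+\alpha(1+|A|)$, so that $(A-B)\,|\beta m + (1+\alpha) + (B+\alpha A)\E| \ge (A-B)(|\beta|m - (1+|B|) - \alpha(1+|A|))$. Since the factor $(1+B\E)^2$ cancels in $\boldsymbol{\chi}$, it remains to bound the denominator $D(1+B\E)^2 - E[(1-\alpha)(1+A\E)(1+B\E) + \alpha(1+A\E)^2 + \beta m(A-B)\E]$ from above; the ordinary triangle inequality together with $E < 0 < D$ gives the upper bound $D(1+|B|)^2 - E(1-\alpha)(1+|A|)(1+|B|) - E\alpha(1+|A|)^2 + |E\beta|m(A-B)$. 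This produces a lower bound $\boldsymbol{\chi}(\psi(r,s;z),D,E) \ge \phi(m)$, where $\phi$ is a ratio of the form $(am-b)/(cm+d)$.

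Finally I would check that $\phi$ is increasing on $[1,\infty)$: direct differentiation makes $\phi'(m)$ proportional to $ad+bc$, which is positive because $D>0$ and $E<0$ force every constituent constant to be positive. Hence $\phi(m) \ge \phi(1)$ for $m \ge 1$, and $\phi(1) \ge 1$ is exactly the displayed hypothesis \eqref{eQ2.9}: at $m=1$ the term $|E\beta|(A-B)$ becomes $|E\beta(A-B)|$, and the denominator bound regroups as $(1+|B|)(D(1+|B|) - E(1-\alpha)(1+|A|)) - E\alpha(1+|A|)^2 + |E\beta(A-B)|$. Thus $\boldsymbol{\chi}(\psi(r,s;z),D,E) \ge 1$, so $\psi(r,s;z) \notin \Omega$ and $\psi \in \Psi(\Omega;A,B)$; Theorem~\ref{th1.3} then yields $p \in \mathcal{P}[A,B]$.
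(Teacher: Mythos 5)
Your proposal is correct and follows essentially the same route as the paper: the same $\Omega$ and $\psi(r,s;z)=(1-\alpha)r+\alpha r^2+\beta s$, the same substitution from \eqref{coa}, the same factorization of the numerator of $\psi-1$ through $A-B$ (the paper writes it as $(A-B)\E\,(\beta m+(1+B\E)+\alpha(1+A\E))$, which is your $(A-B)\E\,(\beta m+(1+\alpha)+(B+\alpha A)\E)$), and the same triangle-inequality bounds leading to an increasing $\phi(m)$ with $\phi(1)\ge 1$ equivalent to \eqref{eQ2.9}. No substantive differences to report.
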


\begin{proof}
Consider  the domian $\Omega$ as in Theorem \ref{Thm 2.1}.
The analytic function $\psi:\C^3 \times \D \to \D$ is defined as
$\psi(r,s;z) = (1-\alpha) r +\alpha r^2 + \beta s.$ To show $\psi \in \Psi[\Omega,\, A,\,B]$, it is suffices  to prove $|\boldsymbol{\chi}(\psi(r,s;z), \,D, \,E) | \geq 1$.
It is easy to deduce that
\[\psi(r,s;z)= \frac{(1-\alpha)(1+A \E)(1+B \E) + \alpha (1+A \E)^2 + \beta m (A-B)\E }{(1+B\E)^2}\]
such that

\begin{align*}
\left|\boldsymbol{\chi}(\psi(r,s;z), \,D, \,E)  \right|
   & = \frac{(A-B)|\beta m + (1+B \E)+  \alpha(1+A\E)|}{\splitfrac{|(1+B \E)(D(1+B\E) - E(1- \alpha)(1+A\E))}{-E \alpha (1+A\E)^2 - E\beta m(A-B)\E|}}\\
   & \geq \frac{(A-B)(|\beta|m - (1+|B|) - \alpha (1+|A|))}{\splitfrac{|(1+B \E)||(D(1+B\E) - E(1- \alpha)(1+A\E))|}{+|E \alpha (1+A\E)^2|+| E\beta m(A-B)\E|}}\\
 & \geq \frac{(A-B)(|\beta|m - (1+|B|) - \alpha (1+|A|)}{\splitfrac{(1+|B|)(D(1+|B|) - E(1- \alpha)(1+|A|))}{- E \alpha (1+|A|)^2+| E\beta m(A-B)|}} =: \phi(m).
\end{align*}
Note that  $\phi(m) \geq \phi(1)$ for $m \geq 1$ and therefore $ \left|\boldsymbol{\chi}(\psi(r,s;z), \,D, \,E) \right|  \geq 1$ whenever the  inequality \eqref{eQ2.9}  holds. Thus  $\psi(r,s;z) \notin \Omega$ and  Theorem \ref{th1.3} yields the desired subordination.
\qedhere
\end{proof}


As an implication of  Theorems \ref{s^2}--\ref{ap'}, each of  following  is sufficient condition for function $f \in \mathcal{S}^*[A,B]$:
\begin{itemize}
\item[(a)] \[\frac{zf'(z)}{f(z)}\left(1 + \beta\left( \frac{zf'(z)}{f(z)}\right)^{-1} \left(1+\frac{zf''(z)}{f'(z)} -\frac{zf'(z)}{f(z)}\right)^2 \right) \in \mathcal{P}[D,E]\]
where  $-1 \leq B< A \leq 1$, $-1 \leq E< 0 < D \leq 1$ and $\beta$ satisfies following  inequality
\begin{align*}
|\beta| (A-B)^2 \geq (D-E)(1+|A|)^2(1+|B|)^2+ |E \beta (A-B)^2|,
\end{align*}
\item[(b)]
\[\frac{zf'(z)}{f(z)}\left(1 + \beta\left(\frac{zf'(z)}{f(z)}\right)^{-2} \left(1+\frac{zf''(z)}{f'(z)} -\frac{zf'(z)}{f(z)}\right) \right) \in \mathcal{P}[D,E],\] where  $-1 \leq B< A \leq 1$,  $-1 \leq E< D \leq 1$ and  $\beta$ satisfies an inequality \eqref{th2.3}.

\item[(c)] \[(1- \alpha + \beta) \frac{zf'(z)}{f(z)} +(\alpha - \beta)\left(\frac{zf'(z)}{f(z)}\right)^2 + \beta \frac{zf''(z)}{f'(z)}
       \in \mathcal{P}[D,E]\]
whenever $ \beta \ne 0$, $-1 \leq B < A \leq 1$, $-1 \leq E< 0< D \leq 1$ and the inequality \eqref{eQ2.9} holds.
\end{itemize}

\begin{corollary}
Let $p\in \mathcal{P}$. For $-1\leq B < A \leq 1$ , $-1 \leq E <0 <D \leq1$ and $\beta \ne 0$. We assume that
\begin{equation}\label{2.10}
(A-B) (|\beta|-(1+|B|)) \geq (D-E) + |2BD - E(A+B)|+|E\beta(A-B)| + |DB^2 - EAB|.
\end{equation}
If  $p(z) + \beta zp'(z) \in \mathcal{P}[D,E]$,  then $p\in\mathcal{P}[A,B].$
\end{corollary}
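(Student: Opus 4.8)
The plan is to treat this corollary as the $\alpha=0$, $k=0$ instance of the admissibility machinery already set up, and to verify directly that the function $\psi(r,s;z)=r+\beta s$ is admissible for the Janowski domain. Concretely, I would take $\Omega=\{w\in\C:|(w-1)/(D-Ew)|<1\}$ exactly as in the proof of Theorem \ref{Thm 2.1} and put $\psi(r,s;z)=r+\beta s$. Since $p\in\mathcal{P}$ gives $p\in\mathcal{H}[1,1]$, Theorem \ref{th1.3} reduces the whole problem to showing $\psi(r,s;z)\notin\Omega$, that is $|\boldsymbol{\chi}(\psi(r,s;z),D,E)|\geq 1$, whenever $r$ and $s$ are given by \eqref{coa}.

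The core of the argument is one algebraic simplification. Substituting \eqref{coa} and subtracting $1$, the numerator collapses to a clean factored form,
\[
\psi(r,s;z)-1=\frac{(A-B)\E\,(1+\beta m+B\E)}{(1+B\E)^2},
\]
so the factor $(A-B)\E$ pulls out. For the denominator I would expand $D-E\psi(r,s;z)$ over the common denominator $(1+B\E)^2$; its numerator multiplies out to
\[
(D-E)+\bigl(2BD-E(A+B)-E\beta m(A-B)\bigr)\E+(DB^2-EAB)e^{2i\theta}.
\]
After the common factor $(1+B\E)^2$ cancels in the quotient, I would lower bound the numerator by the reverse triangle inequality $|1+\beta m+B\E|\geq|\beta|m-(1+|B|)$ and upper bound the denominator by the ordinary triangle inequality, giving
\[
|\boldsymbol{\chi}(\psi(r,s;z),D,E)|\geq\frac{(A-B)(|\beta|m-(1+|B|))}{(D-E)+|2BD-E(A+B)|+m|E\beta(A-B)|+|DB^2-EAB|}=:\phi(m).
\]

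To finish, I would note that $\phi(m)$ is a ratio of two functions affine in $m$ with nonnegative leading coefficients, and a first-derivative test (the cross term is positive because $A>B$ and every constant appearing is nonnegative) shows $\phi$ is increasing, so $\phi(m)\geq\phi(1)$ for $m\geq 1$. Evaluating at $m=1$ turns the desired inequality $\phi(1)\geq 1$ into precisely \eqref{2.10}; hence $|\boldsymbol{\chi}|\geq 1$, $\psi(r,s;z)\notin\Omega$, and Theorem \ref{th1.3} yields $p\in\mathcal{P}[A,B]$. The one delicate point is the numerator bound: the reverse triangle inequality must be applied in the correct direction, which tacitly needs $|\beta|\geq 1+|B|$; this is automatic, since $A-B>0$ and the right-hand side of \eqref{2.10} is nonnegative force $|\beta|-(1+|B|)\geq 0$, and then $|\beta|m\geq|\beta|\geq 1+|B|\geq|1+B\E|$ validates the step for all $m\geq 1$. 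Everything else is routine bookkeeping of the two polynomial estimates.
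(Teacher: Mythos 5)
Your proof is correct and uses exactly the paper's standard machinery (the set $\Omega$, Theorem \ref{th1.3}, the factorization of $\psi(r,s;z)-1$, the two triangle-inequality estimates, and monotonicity of $\phi(m)$ for $m\geq 1$); the paper offers no separate argument, presenting the statement as a corollary of Theorem \ref{ap'}. It is worth noting that your direct computation, which expands $D-E\psi(r,s;z)$ in powers of $e^{i\theta}$ before estimating, is what actually yields the precise constant in \eqref{2.10}: the literal $\alpha=0$ specialization of \eqref{eQ2.9} would instead give $(1+|B|)\bigl(D(1+|B|)-E(1+|A|)\bigr)$ on the right-hand side, which coincides with the corresponding terms of \eqref{2.10} only when $A,B\geq 0$.
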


\begin{corollary}
Let $p\in \mathcal{P}$,  $\beta \ne 0$, $-1\leq B < A \leq 1$ and $-1 \leq E < 0 <D \leq1$. We assume the following inequality
\begin{equation*}
\begin{split}
   (A-B)(|\beta| -(1+|B|) - (1+|A|)) \geq & D(1+|B|)^2-E(1+|A|)^2+|E \beta (A-B)|.
\end{split}
\end{equation*}
If $p^2(z) + \beta zp'(z) \in \mathcal{P}[D,E]$, then $ p\in\mathcal{P}[A,B].$
\end{corollary}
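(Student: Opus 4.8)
The plan is to read this corollary off directly from Theorem~\ref{ap'} by choosing $\alpha = 1$. With $\alpha = 1$ the expression $(1-\alpha)p(z) + \alpha p^2(z) + \beta zp'(z)$ collapses to $p^2(z) + \beta zp'(z)$, which is precisely the quantity assumed here to lie in $\mathcal{P}[D,E]$. Since $\alpha = 1$ is an admissible value in Theorem~\ref{ap'} (where $0 \le \alpha \le 1$), and the remaining hypotheses $-1 \le B < A \le 1$, $-1 \le E < 0 < D \le 1$, $\beta \ne 0$ coincide with those of that theorem, the only thing I need to check is that the admissibility inequality \eqref{eQ2.9} specializes to the inequality stated in the corollary.

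I would substitute $\alpha = 1$ into \eqref{eQ2.9}. The left-hand side $(A-B)(|\beta| - (1+|B|) - \alpha(1+|A|))$ becomes $(A-B)(|\beta| - (1+|B|) - (1+|A|))$. On the right-hand side the factor $E(1-\alpha)(1+|A|)$ vanishes, leaving $(1+|B|)\cdot D(1+|B|) = D(1+|B|)^2$ from the first product, while $-E\alpha(1+|A|)^2$ becomes $-E(1+|A|)^2$ and the term $|E\beta(A-B)|$ is unchanged. Collecting these gives exactly $D(1+|B|)^2 - E(1+|A|)^2 + |E\beta(A-B)|$, which is the right-hand side of the corollary's hypothesis.

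Hence the hypothesis of the corollary is identical to the $\alpha = 1$ instance of the hypothesis of Theorem~\ref{ap'}. Since $p \in \mathcal{P}$ forces $p \in \mathcal{H}[1,1]$, the function $p$ satisfies the analyticity and normalization requirements of Theorem~\ref{ap'}, and the conclusion $p \in \mathcal{P}[A,B]$ follows immediately. There is essentially no obstacle beyond the routine algebraic verification of the inequality; the one point worth care is confirming that the boundary value $\alpha = 1$ is permitted by the range $0 \le \alpha \le 1$ in Theorem~\ref{ap'}, which it is. Should a self-contained argument be preferred instead, I could apply Theorem~\ref{th1.3} directly to $\psi(r,s;z) = r^2 + \beta s$, reproducing verbatim the estimate of $|\boldsymbol{\chi}(\psi(r,s;z),D,E)|$ from the proof of Theorem~\ref{ap'} with $\alpha = 1$ throughout.
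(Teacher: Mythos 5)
Your proposal is correct and matches the paper's intent: the corollary is stated without proof immediately after Theorem~\ref{ap'} precisely because it is the $\alpha=1$ specialization, and your substitution check confirms that \eqref{eQ2.9} reduces exactly to the stated inequality. Nothing further is needed.
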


\begin{theorem} \label{ap'/p}
 Let $p\in \mathcal{P}$, $-1\leq B < A \leq 1$,  $-1 \leq E < 0 <D \leq1$,  $G=1+|A|$ and  $0 \leq \alpha \leq 1$. Assume that
\begin{align}\label{eQ2.10}
(A-B)(|\beta| -G - \alpha(G)^2(1-|B|)^{-1}) \geq&  (G)(D(1+|B|)-E(1- \alpha)(G))
       \\  \notag & -E \alpha(G)^3 (1-|B|)^{-1}+|E \beta (A-B)|
\end{align}
If $(1-\alpha) p(z) +  \alpha p^2(z) + \beta zp'(z)/p(z)\in\mathcal{P}[D,E]$, then $p\in\mathcal{P}[A,B].$
\end{theorem}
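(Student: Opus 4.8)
The plan is to reproduce the proof of Theorem \ref{ap'} almost verbatim, the only genuinely new ingredient being a normalization step forced by the factor $1/p(z)$. First I would take the same set $\Omega = \{w \in \C : |(w-1)/(D-Ew)| < 1\}$ and introduce the candidate admissible function $\psi(r,s;z) = (1-\alpha)r + \alpha r^2 + \beta s/r$, which is exactly the $\psi$ of Theorem \ref{ap'} carrying the extra denominator $r$ corresponding to $k=1$. By Theorem \ref{th1.3} it suffices to verify that $\psi \in \Psi(\Omega;A,B)$, i.e. that $|\boldsymbol{\chi}(\psi(r,s;z),D,E)| \geq 1$ whenever $r$ and $s$ are given by \eqref{coa}, where $\boldsymbol{\chi}(w,D,E) = |(w-1)/(D-Ew)|$.

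Next I would substitute the admissibility values and clear denominators over the common denominator $(1+B\E)^2(1+A\E)$, abbreviating $a = 1+A\E$ and $b = 1+B\E$ so that $a-b = (A-B)\E$. The key algebraic simplification is that the numerator of $\psi-1$ factors through $(A-B)$: using the identity $(1-\alpha)ab + \alpha a^2 - b^2 = (a-b)(b+\alpha a)$ already implicit in Theorem \ref{ap'}, one computes that the numerator of $\psi-1$ equals $(A-B)\E\,[ab+\alpha a^2 + \beta m b]$. This yields
\[ |\boldsymbol{\chi}(\psi(r,s;z),D,E)| = \frac{(A-B)\,|ab+\alpha a^2 + \beta m b|}{|Db^2a - E(1-\alpha)a^2 b - E\alpha a^3 - E\beta m(A-B)\E\,b|}. \]

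The step I expect to be the main obstacle---and the reason the hypothesis \eqref{eQ2.10} carries the factors $(1-|B|)^{-1}$---is that $b = 1+B\E$ now appears asymmetrically: it must be bounded below in the $\beta$-term but above in the others, while the offending term $-E\alpha a^3$ carries no factor of $b$ at all, so no clean common factor can be cancelled. To resolve this I would divide both numerator and denominator by $|b|=|1+B\E|$ before estimating. The numerator then reads $(A-B)|a + \alpha a^2/b + \beta m|$, which is $\geq (A-B)(|\beta|m - |a| - \alpha|a|^2/|b|) \geq (A-B)(|\beta|m - G - \alpha G^2(1-|B|)^{-1})$ after using $|a| \leq G = 1+|A|$ and $|b|\geq 1-|B|$; the denominator becomes $|Dba - E(1-\alpha)a^2 - E\alpha a^3/b - E\beta m(A-B)\E|$, which, using $-E=|E|>0$, $|a|\leq G$, $|b|\leq 1+|B|$, and $|b|\geq 1-|B|$ only in the $a^3/b$ term, is at most $G(D(1+|B|) - E(1-\alpha)G) - E\alpha G^3(1-|B|)^{-1} + |E\beta(A-B)|m$.

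Calling the resulting quotient $\phi(m)$, I would finish exactly as in the earlier theorems: $\phi$ is a ratio of two functions linear in $m$, and since $D>0>E$ forces every constant appearing to be positive, a first-derivative check gives $\phi'(m) > 0$ for $m \geq 1$, so $\phi$ attains its minimum at $m=1$. The inequality $\phi(1)\geq 1$ is then precisely \eqref{eQ2.10}, so $|\boldsymbol{\chi}(\psi(r,s;z),D,E)| \geq 1$; hence $\psi(r,s;z)\notin\Omega$, giving $\psi \in \Psi(\Omega;A,B)$, and Theorem \ref{th1.3} yields $p\in\mathcal{P}[A,B]$.
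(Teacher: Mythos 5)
Your proposal is correct and follows essentially the same route as the paper: the same $\Omega$, the same admissible function $\psi(r,s;z)=(1-\alpha)r+\alpha r^2+\beta s/r$, the same factorization of the numerator through $(A-B)\E$, the same division by $|1+B\E|$ leading to the same lower bound $\phi(m)$, and the same monotonicity argument reducing everything to $\phi(1)\geq 1$, which is \eqref{eQ2.10}. The only difference is cosmetic: you make the factorization identity explicit via the abbreviations $a=1+A\E$, $b=1+B\E$, where the paper simply writes out the computation in full.
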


\begin{proof}
By considering $\Omega$ be as in Theorem \ref{Thm 2.1} and the analytic function  $\psi(r,s,t;z) = (1-\alpha) r +\alpha r^2 + \beta s/r$, it is enough to prove $|\boldsymbol{\chi}(\psi(r,s;z)| \geq 1$. Using (\ref{coa}), we have
\begin{align*}
\psi(r,s;z) 
& = \frac{(1-\alpha)(1+A \E)^2(1+B \E) + \alpha (1+A \E)^3 + \beta m  (A-B)(1+B\E)\E}{(1+A \E)(1+B\E)^2}.
\end{align*}
A simple computation yields
\begin{align*}
\left|\boldsymbol{\chi}(\psi(r,s;z)\right| 
    & = \left|\frac{\splitfrac{(1+A \E)^2(1+B \E) +\alpha(1+A \E)^2(A-B) \E +}{ \beta m  (A-B)(1+B\E)\E - (1+A\E)(1+B \E)^2}}{\splitfrac{D (1+A\E)(1+B \E)^2 - E((1-\alpha)(1+A \E)^2(1+B \E) +}{ \alpha (1+A \E)^3 + \beta m (A-B)(1+B\E) \E)}}\right|\\
   & = \frac{(A-B) |\beta m + (1+A \E) + \alpha (1+A\E)^2 (1+B \E)^{-1}|}{\splitfrac{|(1+A\E)(D(1+B\E) - E(1- \alpha)(1+A\E)) -}{ E \alpha (1+A\E)^3(1+B \E)^{-1} - E \beta m \E (A-B)|}}\\
   & \geq \frac{(A-B)(|\beta| m - (1+|A|) - \alpha(1+|A|)^2(1-|B|)^{-1})}{\splitfrac{|(1+A\E)(D(1+B\E) - E(1- \alpha)(1+A\E))| +}{| E\alpha (1+A\E)^3(1+B \E)^{-1}| +|E \beta \E m (A-B)|}}\\
   & \geq \frac{(A-B)(|\beta| m - (1+|A|) - \alpha(1+|A|)^2(1-|B|)^{-1})}{\splitfrac{(1+|A|)(D(1+|B|) - E(1-\alpha)(1+|A|)) -}{E \alpha (1+|A|)^3(1-|B|)^{-1} +m|E \beta (A-B)|}}=:\phi(m).
\end{align*}
It is  observed that  $\phi'(m) >0 $ for all $m \geq 1$. As computation done in the previous theorem, we get the required subordination result.
\qedhere
\end{proof}
\begin{corollary}
Let $p\in \mathcal{P}$,  $\beta \ne 0$, $-1\leq B < A \leq 1$ and $-1 \leq E< 0 <D \leq1$. Suppose that
\begin{equation}\label{2.15}
(A-B) (|\beta|-(1+|A|)) \geq (D-E) + |D(B+A) - 2 EA|+|E\beta(A-B)| + |DBA - EA^2|.
\end{equation}
If $p(z) + \beta { zp'(z) }/{p(z)} \in \mathcal{P}[D,E]$, then
$p\in\mathcal{P}[A,B].$
\end{corollary}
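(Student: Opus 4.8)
The plan is to recognize this corollary as the $\alpha = 0$ instance of Theorem~\ref{ap'/p}: the admissible function $\psi(r,s;z) = (1-\alpha)r + \alpha r^2 + \beta s/r$ collapses to $\psi(r,s;z) = r + \beta s/r$, and the hypothesis $(1-\alpha)p(z) + \alpha p^2(z) + \beta zp'(z)/p(z) \in \mathcal{P}[D,E]$ becomes $p(z) + \beta zp'(z)/p(z) \in \mathcal{P}[D,E]$. However, the inequality \eqref{2.15} is genuinely \emph{weaker} than the one obtained by literally substituting $\alpha = 0$ into \eqref{eQ2.10}, so I would not quote the theorem verbatim but instead rerun the admissibility argument of Theorem~\ref{th1.3} with a sharper estimate of the denominator. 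Taking $\Omega = \{w \in \C : |(w-1)/(D-Ew)| < 1\}$, the goal is to show $\psi \in \Psi(\Omega;A,B)$, i.e.\ $|\boldsymbol{\chi}(\psi(r,s;z),D,E)| \geq 1$ for $r,s$ given by \eqref{coa}.

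First I would insert the admissibility values \eqref{coa} to get the closed form $\psi(r,s;z) = ((1+A\E)^2 + \beta m (A-B)\E)/((1+A\E)(1+B\E))$. The clean step is the numerator of $\psi(r,s;z)-1$: combining over the common denominator, the factor $(A-B)\E$ pulls out, giving $\psi - 1 = (A-B)\E(\beta m + 1 + A\E)/((1+A\E)(1+B\E))$. For the denominator $D - E\psi$, rather than factoring $|1+A\E|$ and $|1+B\E|$ separately (the crude route taken in the body of Theorem~\ref{ap'/p}), I would expand $D(1+A\E)(1+B\E) - E(1+A\E)^2 - E\beta m(A-B)\E$ as a single quadratic in $\E$ and collect it as $(D-E) + (D(A+B) - 2EA - E\beta m(A-B))\E + (DAB - EA^2)\E^2$.

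Next I would estimate both parts. The numerator of $\boldsymbol{\chi}$ is $(A-B)|\beta m + 1 + A\E| \geq (A-B)(|\beta|m - (1+|A|))$ by the reverse triangle inequality (valid since the hypothesis forces $|\beta| > 1+|A|$). The denominator, bounded term by term in powers of $\E$, does not exceed $(D-E) + |D(A+B)-2EA| + m|E\beta(A-B)| + |DAB - EA^2|$. These combine into $|\boldsymbol{\chi}(\psi(r,s;z),D,E)| \geq \phi(m)$, where $\phi$ is the rational function of $m$ with numerator and denominator both affine in $m$. A first-derivative check shows $\phi$ is increasing for $m \geq 1$, so $\phi(m) \geq \phi(1)$, and $\phi(1) \geq 1$ is exactly the hypothesis \eqref{2.15}. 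Hence $\psi(r,s;z) \notin \Omega$, so $\psi \in \Psi(\Omega;A,B)$, and Theorem~\ref{th1.3} yields $p \in \mathcal{P}[A,B]$.

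The main obstacle — indeed the only nonroutine point — is the denominator estimate: one must resist factoring $|(1+A\E)(1+B\E)|$ and instead keep the product as one quadratic in $\E$, so that the cross term $D(A+B) - 2EA$ and the quadratic coefficient $DAB - EA^2$ each appear under their own absolute value. This grouping is precisely what makes \eqref{2.15} weaker than the $\alpha = 0$ reduction of \eqref{eQ2.10}, and the care required is in the sign bookkeeping, using $E < 0 < D$, when collecting the coefficients of $\E$ and $\E^2$.
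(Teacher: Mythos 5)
Your proposal is correct and follows the same admissibility-condition machinery the paper uses throughout; the paper states this corollary without proof, but the specific grouping of terms in \eqref{2.15} --- $(D-E)$, $|D(A+B)-2EA|$, $|DAB-EA^2|$ --- shows the intended derivation is exactly your expansion of $D(1+A\E)(1+B\E)-E(1+A\E)^2-E\beta m(A-B)\E$ as a quadratic in $\E$. You are also right on the one subtle point: since \eqref{2.15} is weaker than the $\alpha=0$ specialization of \eqref{eQ2.10}, the corollary is not a literal consequence of Theorem~\ref{ap'/p} and the estimate must be rerun, as you do.
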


For a positive integer $k$, the condition on $\beta$ is determined so that  $(1/p(z)) - \beta zp'(z)/p^k(z) \in \mathcal{P}[D,E]$ implies $p\in \mathcal{P}[A,B]$.

\begin{theorem} \label{p^k}
Let $p\in \mathcal{P}$,  $-1 \leq B < A \leq 1$ and $-1 \leq E < 0 < D \leq 1$. Then $p\in \mathcal{P}[A,B]$
for each of the following subordination conditions:
\begin{itemize}
\item[(a)]  $(1/p(z)) - \beta zp'(z)\in\mathcal{P}[D,E]$ where $\beta$ satisfies
\begin{equation}\label{eQL1}
\begin{split}
(A-B)(|\beta|(1-|A|) - (1+|B|)^2) \geq & (1+|B|)^2(D(1+|A|) - E(1+|B|))  \\&+|E \beta (A-B)|(1+|A|)
\end{split}
\end{equation}
\item[(b)]  $(1/p(z)) - \beta zp'(z)/p^k(z) \in\mathcal{P}[D,E]$ for $k = 1,2$ where $\beta$ satisfies
\begin{equation}\label{eQL2}
\begin{split}
(A-B)(|\beta|-(1+|A|)^{k-1}(1+|B|)^{2-k} )\geq &(1+|A|)^{k-1}(1+|B|)^{2-k}(D(1+|A|)\\ & - E(1+|B|)) +|E \beta (A-B)|.
\end{split}
\end{equation}
\item[(c)]  $(1/p(z)) - \beta zp'(z)/p^k(z) \in\mathcal{P}[D,E]$ for $k >2$ where $\beta$ satisfies
\begin{equation}\label{eQL3}
\begin{split}
(A-B)(|\beta|(1-|B|)^{k-2}-(1+|A|)^{k-1}) \geq &  (1+|A|)^{k-1}(D(1+|A|) - E(1+|B|))\\ & +|E \beta (A-B)|(1+|B|)^{k-2}.
\end{split}
\end{equation}
\end{itemize}
\end{theorem}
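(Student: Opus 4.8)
The plan is to treat all three parts (a), (b), (c) by the single admissibility scheme already used for the prototype result, Theorem~\ref{Thm 2.1}; the only genuinely new ingredient is the bookkeeping of the powers of $1+B\E$. First I would take the same region $\Omega=\{w\in\C:|(w-1)/(D-Ew)|<1\}$, so that the hypothesis reads $\psi(p(z),zp'(z);z)\in\mathcal{P}[D,E]$ for the analytic function $\psi(r,s;z)=1/r-\beta s/r^{k}$. Substituting the admissibility values $r=(1+A\E)/(1+B\E)$ and $s=m(A-B)\E/(1+B\E)^{2}$ from \eqref{coa} and cancelling the common factor $(1+A\E)^{k}$ from numerator and denominator, a direct computation of $\boldsymbol{\chi}(\psi,D,E)=|(\psi-1)/(D-E\psi)|$ yields
\begin{equation*}
\boldsymbol{\chi}(\psi,D,E)=\left|\frac{-(A-B)\E\big[(1+A\E)^{k-1}+\beta m(1+B\E)^{k-2}\big]}{(1+A\E)^{k-1}\big[D(1+A\E)-E(1+B\E)\big]+E\beta m(A-B)\E(1+B\E)^{k-2}}\right|.
\end{equation*}
By Theorem~\ref{th1.3} it then suffices to show $\boldsymbol{\chi}(\psi,D,E)\geq1$, i.e.\ $\psi\notin\Omega$, for $m\geq1$.

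The estimate splits according to the sign of the exponent $k-2$, which is precisely the trichotomy $k=0$, $k\in\{1,2\}$, $k>2$ of the statement. In case (c), where $k>2$, both exponents are nonnegative and I would clear nothing: the reverse triangle inequality on the numerator, together with $|1+B\E|^{k-2}\geq(1-|B|)^{k-2}$ and $|1+A\E|^{k-1}\leq(1+|A|)^{k-1}$, produces the lower bound $(A-B)\big(|\beta|m(1-|B|)^{k-2}-(1+|A|)^{k-1}\big)$, while the forward triangle inequality on the denominator, combined with the key estimate $|D(1+A\E)-E(1+B\E)|\leq D(1+|A|)-E(1+|B|)$ (valid because $D>0>E$), bounds it above by $(1+|A|)^{k-1}\big(D(1+|A|)-E(1+|B|)\big)+|E\beta(A-B)|m(1+|B|)^{k-2}$. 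For part (b) with $k\in\{1,2\}$ and part (a) with $k=0$, the exponent $k-2$ (and, when $k=0$, also $k-1$) is negative, so I would first multiply numerator and denominator by $(1+B\E)^{2-k}$, and for $k=0$ also by $(1+A\E)$, to remove the reciprocals before applying the same two inequalities. This redistribution of the factors $1\pm|A|$ and $1\pm|B|$ yields exactly the right-hand sides of \eqref{eQL2} and \eqref{eQL1}, respectively.

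In every case the resulting lower bound is a M\"obius-type function $\phi(m)=(am-b)/(c+dm)$ with $a,b,c,d>0$, for which $\phi'(m)=(ac+bd)/(c+dm)^{2}>0$; hence $\phi$ is increasing on $[1,\infty)$ and attains its minimum at $m=1$. Consequently $\boldsymbol{\chi}(\psi,D,E)\geq\phi(m)\geq\phi(1)$, and rearranging the single inequality $\phi(1)\geq1$ reproduces the admissibility condition \eqref{eQL1}, \eqref{eQL2} or \eqref{eQL3} in the respective case. This gives $\psi\notin\Omega$, so $\psi\in\Psi(\Omega;A,B)$, and Theorem~\ref{th1.3} delivers $p\in\mathcal{P}[A,B]$.

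The main obstacle is the sign-and-power bookkeeping rather than any analytic subtlety: for each $k$ one must correctly decide which of $1+A\E$ and $1+B\E$ multiplies the dominant $\beta m$ term (hence is bounded \emph{below} by $1-|A|$ or $1-|B|$) versus the subtracted term (bounded \emph{above} by $1+|A|$ or $1+|B|$), since the wrong choice collapses the positivity of the numerator. Relatedly, the reverse triangle inequality is only useful once the hypotheses force $|\beta|$ large enough that the $\beta m$ term dominates; this is exactly what keeps the left-hand sides of \eqref{eQL1}--\eqref{eQL3} positive, and it is what guarantees $am-b>0$ so that the conclusion $\phi(1)\geq1$ has content.
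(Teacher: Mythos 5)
Your proposal is correct and follows essentially the same route as the paper: the same region $\Omega$, the same admissible function $\psi(r,s;z)=1/r-\beta s/r^{k}$, the same triangle/reverse-triangle estimates leading to an increasing M\"obius-type bound $\phi(m)$ minimized at $m=1$, with $\phi(1)\geq 1$ recovering \eqref{eQL1}--\eqref{eQL3}. The only difference is presentational: the paper writes out only the case $k=0$ and declares (b), (c) analogous, whereas you give the unified expression for $\boldsymbol{\chi}(\psi,D,E)$ valid for all $k$ and then split by the sign of $k-2$, which matches the paper's computation in the detailed case.
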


\begin{proof}
For $k = 0, 1,2,3, \ldots$, let  $\Omega$ as in Theorem \ref{Thm 2.1} and the  function $\psi$ be defined as
\begin{equation*}
\psi(r,s;z) = \frac{1}{r} - \beta \frac{s}{r^k}.
\end{equation*}
In view of (\ref{coa}), the function $\psi$ takes the following shape:
\begin{equation}
\psi(r,s;z) = \frac{1+B\E}{1+A \E} - \frac{\beta m \E (A-B)(1+B \E)^k}{(1+B \E)^2 (1+A \E)^k}.
\end{equation}

(a) For $k = 0$, we have
\begin{align*}
\left|\boldsymbol{\chi}(\psi(r,s;z), \,D, \,E)\right|
 &=\frac{|(A-B) ((1+B\E)^2 + \beta m(1+A \E))|}{\splitfrac{|(1+B\E)^2(D(1+A \E) - E(1+B\E)) + E \beta m (A-B)\E}{(1+A\E)|}}\\
 & \geq \frac{(A-B)(|\beta|m|1+A\E| - |(1+B \E)^2|)}{\splitfrac{|(1+B\E)^2(D(1+A \E) - E(1+B\E))|+ |E \beta m  (A-B)\E}{(1+A\E)|}}\\
 &\geq \frac{(A-B)(|\beta|m(1-|A|) - (1+|B|)^2)}{\splitfrac{|(1+B\E)^2| |(D(1+A \E) - E(1+B\E))|+ |E \beta m (A-B)}{(1+A\E)|}}\\
 & \geq \frac{(A-B)(|\beta|m(1-|A|) - (1+|B|)^2)}{\splitfrac{(1+|B|)^2(|D(1+A \E)| + |E(1+B\E)|)+ |E \beta m (A-B)|}{|(1+A\E)|}}\\
 &\geq \frac{(A-B)(|\beta|m (1-|A|) - (1+|B|)^2)}{(1+|B|)^2(|D|(1+|A|)| + |E|(1+|B|))+ |E \beta m (A-B)|(1+|A|)}\\
 &=: \phi(m).
 \end{align*}
Using first derivative test we note that $\phi$ is an increasing function for $ m \geq 1$. Thus the function $\phi(m)$ has minimum value at $m =1$. Therefore  $ \left|\boldsymbol{\chi}(\psi(r,s;z), \,D, \,E) \right|  \geq 1$ whenever the  inequality \eqref{eQL1} holds. Thus Theorem \ref{th1.3} complete the desired proof.
Part (b) and (c) can be proved as  part (a). We are  omitting further details here.
\qedhere
\end{proof}

Let $ \beta \ne 0$, $-1 \leq B< A \leq 1$ and $-1 \leq E< 0 < D \leq 1$. If  one of the   following  subordination  holds for $f \in \mathcal{A}$:
\begin{itemize}


\item[(i)] For $(A-B)(|\beta|-(1+|B|)) \geq (1+|B|)(D(1+|A|) - E(1+|B|)) +|E \beta (A-B)|$,
\[\left( \frac{zf'(z)}{f(z)}\right)^{-1}\left(- \beta \frac{zf'(z)}{f(z)}\left(1+\frac{zf''(z)}{f'(z)} -\frac{zf'(z)}{f(z)}\right) \right)\in \mathcal{P}[D,E];\]

\item[(ii)] For $(A-B)(|\beta|-(1+|A|)) \geq (1+|A|)(D(1+|A|) - E(1+|B|)) +|E \beta (A-B)|$,
\[\left( \frac{zf'(z)}{f(z)}\right)^{-1}\left(1- \beta \left(1+\frac{zf''(z)}{f'(z)} -\frac{zf'(z)}{f(z)}\right) \right)\in \mathcal{P}[D,E];\]
\end{itemize}
then $f \in \mathcal{S}^*[A,B]$.

Motivated by the work in \cite{Ali BB}, we obtain the conditions on $A,B,D,E$ for  a   general  Briot-Bouquet differential subordination in the following theorem.

\begin{theorem} \label{bg}
Let $-1\leq B < A \leq 1$,  $-1 \leq E <D \leq1$ and $\beta \gamma >0$ satisfy
\begin{align}\label{BBE}
(A-B)((1-|B|) - (\beta(1+|A|)+ \gamma(1+|B|))^2) & \geq (\beta(1+|A|) + \gamma(1+|B|))^2   (D-E \notag \\
   & +|DB-EA|)+ |E|(A-B)(1+|B|).
\end{align}
If  $p\in \mathcal{P}$ and  $p(z) +  {zp'(z)}/{(\beta p(z)+ \gamma)^2} \in \mathcal{P}[D,E],
$
then $p\in \mathcal{P}[A,B]$.
\end{theorem}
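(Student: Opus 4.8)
The plan is to apply the admissible-function machinery exactly as in the proofs of Theorems~\ref{Thm 2.1}--\ref{ap'/p}. I would take $\Omega = \{w \in \C : |(w-1)/(D-Ew)| < 1\}$ and define the analytic function $\psi(r,s;z) = r + s/(\beta r + \gamma)^2$. By Theorem~\ref{th1.3} it is enough to verify that $\psi \in \Psi(\Omega;A,B)$, that is, that $\boldsymbol{\chi}(\psi(r,s;z),D,E) \geq 1$ whenever $r$ and $s$ are given by \eqref{coa}.

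First I would substitute $r = (1+A\E)/(1+B\E)$ and $s = m(A-B)\E/(1+B\E)^2$. Writing $P := \beta(1+A\E)+\gamma(1+B\E)$, one has $\beta r + \gamma = P/(1+B\E)$, so the factor $(1+B\E)^2$ in $s$ cancels the denominator of $(\beta r+\gamma)^2$ and
\[
\psi(r,s;z) = \frac{1+A\E}{1+B\E} + \frac{m(A-B)\E}{P^2}.
\]
Putting $\psi - 1$ and $D - E\psi$ over the common denominator $(1+B\E)P^2$ and cancelling it gives
\[
\boldsymbol{\chi}(\psi(r,s;z),D,E) = \frac{(A-B)\,\big|P^2 + m(1+B\E)\big|}{\big|[(D-E)+(DB-EA)\E]P^2 - Em(A-B)\E(1+B\E)\big|}.
\]

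Next I would estimate numerator and denominator by the triangle inequality, using $1-|B| \leq |1+B\E| \leq 1+|B|$ and $|(D-E)+(DB-EA)\E| \leq (D-E)+|DB-EA|$ (legitimate since $D-E>0$). The delicate step, and the one place where the hypothesis $\beta\gamma>0$ is essential, is the bound on $|P|$: because $\beta$ and $\gamma$ share a sign,
\[
|P| = |\beta(1+A\E)+\gamma(1+B\E)| \leq |\beta|(1+|A|)+|\gamma|(1+|B|) = |\beta(1+|A|)+\gamma(1+|B|)|,
\]
so that $|P|^2 \leq (\beta(1+|A|)+\gamma(1+|B|))^2$; without a common sign this square estimate would fail. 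Replacing $|P|^2$ by this bound (as an increase of the denominator and, since it is subtracted, as a decrease of the numerator) produces
\[
\boldsymbol{\chi} \geq \frac{(A-B)\big(m(1-|B|) - (\beta(1+|A|)+\gamma(1+|B|))^2\big)}{((D-E)+|DB-EA|)(\beta(1+|A|)+\gamma(1+|B|))^2 + |E|(A-B)(1+|B|)m} =: \phi(m).
\]

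Finally, $\phi$ has the shape $(am-b)/(cm+d)$ with $a,b,c,d \geq 0$, so $\phi'(m) = (ad+bc)/(cm+d)^2 \geq 0$ and $\phi$ is non-decreasing; hence its minimum over $m\geq 1$ is $\phi(1)$. After clearing the positive denominator, $\phi(1) \geq 1$ is precisely the hypothesis \eqref{BBE}, which in turn forces its own left-hand side to be nonnegative and thereby guarantees that the numerator of $\phi$ remains nonnegative, so every triangle-inequality estimate above runs in the intended direction. Therefore $\boldsymbol{\chi} \geq \phi(1) \geq 1$, which shows $\psi(r,s;z) \notin \Omega$, i.e.\ $\psi \in \Psi(\Omega;A,B)$, and Theorem~\ref{th1.3} yields $p \in \mathcal{P}[A,B]$. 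The main obstacle is thus not the bookkeeping but locating the single estimate that genuinely requires $\beta\gamma>0$; everything else is the routine monotonicity-in-$m$ argument common to the preceding theorems.
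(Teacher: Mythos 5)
Your proposal is correct and follows essentially the same route as the paper: same $\Omega$, same admissible function $\psi(r,s;z)=r+s/(\beta r+\gamma)^2$, the same reduction of $\boldsymbol{\chi}$ to a quotient over the common denominator $(1+Be^{i\theta})(\beta(1+Ae^{i\theta})+\gamma(1+Be^{i\theta}))^2$, the same triangle-inequality bounds leading to the same $\phi(m)$, and the same monotonicity-in-$m$ step ending at $\phi(1)\geq 1$, which is \eqref{BBE}. Your explicit identification of the one estimate that needs $\beta\gamma>0$ (so that $|\beta|(1+|A|)+|\gamma|(1+|B|)$ equals $|\beta(1+|A|)+\gamma(1+|B|)|$) is a small clarification the paper leaves implicit, but it is not a different argument.
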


\begin{proof}
Let  $\Omega$ be defined  as in Theorem \ref{Thm 2.1}. Consider the analytic function
 \[ \psi(r,s;z) = r + \frac{s}{(\beta r + \gamma)^2}.\]
The required subordination is obtained if we show $\psi \in \Psi[\Omega,\, A,\,B]$ by making use of  Theorem \ref{th1.3}.
Using \eqref{coa},  the function $ \psi(r,s;z)$ takes the following form
 \begin{equation*}
 \psi(r,s;z)  = \frac{(1+A \E ) (\beta (1+A \E)+ \gamma(1+B \E))^2+ m (A-B)(1+B \E)e^{i \theta} }{(1+B \E) (\beta (1+A \E)+ \gamma(1+B \E))^2}
 \end{equation*}

 so that
\begin{align*}
 \left|\boldsymbol{\chi}(\psi(r,s;z), \,D, \,E) \right|
 &=\frac{|(A-B) ((\beta (1+A \E)+ \gamma(1+B \E))^2+m(1+B \E))|}{\splitfrac{|(D(1+B\E) - E((1+A \E))(\beta (1+A \E)+ \gamma(1+B \E))^2} {-E m \E(A-B)(1+B \E)|}}\\
 & \geq  \frac{(A-B)(m |1+B\E| - |(\beta(1+A \E) + \gamma(1+B\E))^2|)}{\splitfrac{|(D-E)(\beta (1+A \E)+ \gamma(1+B \E))^2| +|\E(DB-EA)}{(\beta (1+A \E)+ \gamma(1+B \E))^2| + |E m \E(A-B)(1+B \E)|}}\\
 & \geq \frac{(A-B)( m (1-|B|) - (|\beta(1+A \E)|+|\gamma (1+B \E)|)^2)}{\splitfrac{|(D-E)(\beta (1+A \E)+ \gamma(1+B \E))^2| +|(DB-EA)}{(\beta (1+A \E)+ \gamma(1+B \E))^2| + |E m (A-B)(1+B \E)|}}\\
 & \geq \frac{(A-B)(m (1-|B|) - (\beta(1+|A|)+\gamma (1+|B|))^2)}{\splitfrac{(D-E)(\beta (1+|A|)+ \gamma(1+|B|))^2 +|(DB-EA)}{(\beta (1+|A|)+ \gamma(1+|B|))^2| + m |E (A-B)|(1+|B|)}}  =: \phi(m).
 \end{align*}
A computation shows that $\phi'(m) > 0$. Thus for $m \geq 1$, $\phi(m) \geq \phi(1)$ and therefore $ \left|\boldsymbol{\chi}(\psi(r,s;z), \,D, \,E) \right|  \geq 1$ whenever the  inequality \eqref{BBE} holds.
This implies that $\psi(r,s;z) \notin \Omega$. Hence the desired subordination is obtained.
\qedhere
\end{proof}
%

 \end{document}